\newtheorem{theorem}{Theorem}[section]
\newtheorem{defin}[theorem]{Definition}
\newtheorem{lemma}[theorem]{Lemma}
\newtheorem{rem}[theorem]{Remark}
\let \div \relax
\let \vb \mathbf
\DeclareMathOperator{\div}{div}
\DeclareMathOperator{\dist}{dist}
\DeclareMathOperator{\N}{\mathbb{N}}
\DeclareMathOperator{\R}{\mathbb{R}}
\newcommand{\dd}{\, \mathrm{d}}
\newcommand{\del}{\partial}
\newcommand{\e}{\varepsilon}
\newcommand{\weak}{\rightharpoonup}
\newcommand{\B}{\mathcal{B}}
\newcommand{\vu}{\vb u}
\newcommand{\ms}{\mathbb{S}}
\renewcommand{\rho}{\varrho}
\renewcommand{\phi}{\varphi}
\title{Homogenization of the two-dimensional evolutionary compressible Navier-Stokes equations}
\date{\today}
\author{\v S\'arka Ne\v casov\'a}
\address{Institute of Mathematics, Czech Academy of Sciences,
\v Zitn\'a 25, 115 67 Praha 1, Czech Republic.}
\email{matus@math.cas.cz}
\author{Florian Oschmann}
\address{Institute of Mathematics, Czech Academy of Sciences,
\v Zitn\'a 25, 115 67 Praha 1, Czech Republic.}
\email{oschmann@math.cas.cz}
\begin{document}
\maketitle

\begin{abstract}
We consider the evolutionary compressible Navier-Stokes equations in a two-di\-men\-sional perforated domain, and show that in the subcritical case of very tiny holes, the density and velocity converge to a solution of the evolutionary compressible Navier-Stokes equations in the non-perforated domain.
\end{abstract}

\section{Introduction}
Homogenization of different types of fluid flow models have been extensively investigated during the last decades and are still topic of research. One of the first results in this field was given by Tartar in \cite{Tartar1980} for the case the obstacles' mutual distance is of the same order as their radius. Later, for stationary incompressible Stokes and Navier-Stokes equations, in his seminal PhD thesis \cite{Allaire1990a, Allaire1990b} Allaire figured out that for perforated domains in dimension $d\geq 2$, there are essentially three regimes of particle sizes: very tiny particles shall not influence the flow in a crucial way, meaning that the limiting system is the same as the one in the perforated domain. Large holes put large friction onto the fluid, leading in the limit to Darcy's law. The ``in-between case'' of critically sized holes leads to an additional Brinkman term, which was already discovered for the Poisson equation in \cite{CioranescuMurat1982}. To make things more precise, for $\e>0$ being the distance between particles and dimension $d=2$, Allaire considered holes scaling like $\exp(-\e^{-\alpha})$ for some $\alpha>0$. Here, $\alpha>2$ corresponds to the case of tiny holes, $\alpha<2$ to the case of large holes, and $\alpha=2$ is the critical value. The main difference between those three regimes is, heuristically, the following: small holes with $\alpha>2$ are too tiny to have a high effect on the fluid's velocity, thus, in the limit, the equations preserve their form. For large holes $\alpha<2$, the fluid is hindered more and more until the fluid velocity slows down to zero. By a proper rescaling, one can then recover Darcy's law. The critical case $\alpha=2$ corresponds to a Brinkman law, where in the limiting equations an extra friction term pops up, which is reminiscent of the holes. These heuristics are made rigorous in \cite{Allaire1990a, Allaire1990b}. In this context, let us mention the book \cite{MarchenkoKhruslov2008}, where different types of inclusions and boundary values are discussed, and also the related work \cite{KhrabustovskyiPlum2022}, where the scalar resolvent problem for the Robin-Laplace operator was considered for all types of hole sizes, recovering similar Brinkman-like laws as in \cite{CioranescuMurat1982}.\\

Nowadays, a vast literature on fluid flow homogenization is available. Without claiming completeness, we cite here just a few, ordered to the fluid model they belong to, and refer to the references therein for further reading. Stationary incompressible Stokes equations in two and three spatial dimensions where investigated in \cite{Lu2020} for the case the particles are distributed according to some hard-sphere condition. This condition occurred already in \cite{Allaire1990a, Allaire1990b}, and was relaxed in \cite{Hillairet2018}, where the author considered disjoint holes that may be ``close'' to each other (in a sense to be specified). In \cite{GiuntiHoefer2019, Giunti2021} the authors considered randomly placed holes that are allowed to overlap for the critical and large-size case, respectively. H\"ofer considered in \cite{Hoefer2022} the unsteady system, together with several sizes of holes and a vanishing viscosity limit. For compressible fluids, both stationary and time-dependent Navier-Stokes equations in three dimensions where considered in \cite{Masmoudi2002, FeireislLu2015, DieningFeireislLu2017, LuSchwarzacher2018, HoeferKowalczykSchwarzacher2021, BellaOschmann2022, BellaOschmann2023} also for different sizes and configurations of holes. We also emphasize the works regarding homogenization of three-dimensional Navier-Stokes-Fourier equations in \cite{LuPokorny2021, PokornySkrisovski2021, Oschmann2022}.\\

Regarding homogenization of \emph{two-dimensional} compressible Navier-Stokes equations, to the best of the authors' knowledge, the only available result is \cite{NecasovaPan2022}, which focusses on the steady case. The aim of this paper is therefore to investigate the homogenization of the \emph{evolutionary} system. Note that one of the main difficulties between the three- and two-dimensional case is the use of appropriate cut-off functions in order to pass to the limit in the momentum equation. These functions are related two the harmonic (also called Newtonian) capacity of the holes: whereas in three dimensions, essentially linear cut-offs do the job, two-dimensional cut-off functions must be constructed in a more subtle manner. To show our result, we will make use of an idea of Bravin in \cite{Bravin2022}, where the author introduced a refined concept of cut-off functions for a single hole in $\R^2$. We adopt this strategy and generalize it to the case of many obstacles.\\

\paragraph{\textbf{Notation.}} We use the standard notations for Lebesgue and Sobolev spaces, and denote them even for vector- or matrix-valued functions as in the scalar case, e.g., $L^p(D)$ instead of $L^p(D;\R^2)$. The Frobenius inner product of two matrices $A,B\in \R^{2\times 2}$ is denoted by $A:B=\sum_{i,j=1}^2 A_{ij} B_{ij}$. Moreover, we use the notation $a\lesssim b$ whenever there is a generic constant $C>0$ which is independent of $a$, $b$, and $\e$ such that $a\leq C b$. Lastly, we denote for a function $f$ with domain of definition $D_f\subset\R^2$ its zero prolongation by $\tilde{f}$, that is,
\begin{align*}
\tilde{f}=f \text{ in } D_f,\quad \tilde{f}=0 \text{ in } \R^2\setminus D_f.
\end{align*}

\section{The model, weak solutions, and the main result}
In this section, we introduce the perforated domain, the evolutionary compressible Navier-Stokes equations, and state our main result. We start with the description of the perforated domain and the equations governing the fluid's motion.
\subsection{The perforated domain and the Navier-Stokes equations}
For $\e\in (0,1)$, let $D\subset \R^2$ be a bounded domain with smooth boundary, $\{z_i^\e\}_{i\in \N}\subset \R^2$ be a collection of distinct points, and $K_\e\subset\N$ be the set of indices such that
\begin{align}
\{z_i^\e\}_{i\in K_\e}\subset D,\quad \forall i,j\in K_\e, i\neq j: |z_i^\e-z_j^\e|\geq 2\e,\ \dist(z_i^\e, \del D)>\e. \label{defKeps}
\end{align}
We also assume that the holes may become ``denser'' in $D$ as $\e\to 0$, that is,
\begin{align}\label{scKe}
\exists C>0\, \forall \e>0:\quad |K_\e| \leq C\e^{-2}.
\end{align}
Moreover, let $F\subset B_1(0)$ be a compact, simply connected set with smooth boundary and $0\in F$, $\alpha>2$, and set
\begin{align}
a_\e &= e^{-\e^{-\alpha}},\quad D_\e= D\setminus \bigcup_{i\in K_\e} (z_i^\e+a_\e F). \label{defDeps}
\end{align}
For fixed $T>0$, we consider in $(0,T)\times D_\e$ the evolutionary compressible Navier-Stokes equations
\begin{align}\label{NSE}
\begin{cases}
\del_t \rho_\e + \div(\rho_\e \vu_\e)=0 & \text{in } (0,T)\times D_\e,\\
\del_t(\rho_\e \vu_\e) + \div(\rho_\e \vu_\e \otimes \vu_\e) + \nabla p(\rho_\e) = \div \ms(\nabla \vu_\e) + \rho_\e \vb f & \text{in } (0,T)\times D_\e,\\
\vu_\e=0 & \text{on } (0,T)\times \del D_\e,\\
\rho_\e(0,\cdot)=\rho_{\e 0},\ (\rho_\e\vu_\e)(0,\cdot)=\vb q_{\e 0} & \text{in } D_\e.
\end{cases}
\end{align}
Here, $\rho_\e$ and $\vb u_\e$ denote the fluid's density and velocity, respectively, $p(s)=s^\gamma$ for some $\gamma>1$, $\mathbb{S}(\nabla \vu)$ is the Newtonian viscous stress tensor of the form
\begin{align*}
\mathbb{S}(\nabla\vu)=\mu(\nabla\vu + \nabla\vu^T-\div(\vb u)\mathbb{I})+\eta\div(\vb u)\mathbb{I}, \quad \mu>0,\, \eta\geq 0,
\end{align*}
and $\vb f\in L^\infty((0,T)\times D)$ is given.

\subsection{Weak solutions and main result}
For further use, we introduce the concept of finite energy weak solutions.
\begin{defin}\label{def1}
Let $T>0$ be fixed, $\gamma>1$, and let the initial data satisfy
\begin{align*}
\rho(0,\cdot)=\rho_0,\quad (\rho\vb u)(0,\cdot)=\vb q_0,
\end{align*}
together with the compatibility conditions
\begin{align}\label{init}
\begin{split}
&\rho_0\geq 0 \text{ a.e.~in } D_\e, \quad \rho_0\in L^\gamma(D_\e),\\
&\vb q_0=0 \text{ on } \{\rho_0=0\}, \quad \vb q_0 \in L^\frac{2\gamma}{\gamma+1}(D_\e),\quad \frac{|\vb q_0|^2}{\rho_0} \in L^1(D_\e).
\end{split}
\end{align}
 We call a duplet $(\rho,\vb u)$ a \emph{finite energy weak solution} to system \eqref{NSE} if:
\begin{itemize}
\item The solution belongs to the regularity class
\begin{gather*}
\rho\geq 0 \text{ a.e.~in } D_\e, \quad \rho\in L^\infty(0,T;L^\gamma(D_\e)), \quad \int_{D_\e} \rho \dd x = \int_{D_\e} \rho_0 \dd x,\\
\vb u\in L^2(0,T;W_0^{1,2}(D_\e)), \quad \rho\vb u\in L^\infty(0,T;L^\frac{2\gamma}{\gamma+1}(D_\e));
\end{gather*}
\item We have
\begin{align}\label{renCE}
\del_t b(\tilde{\rho})+\div(b(\tilde{\rho})\tilde{\vu})+(\tilde{\rho}b'(\tilde{\rho})-b(\tilde{\rho}))\div\tilde{\vu}=0 \text{ in } \mathcal{D}'((0,T)\times \R^2)
\end{align}
for any $b\in C^1([0,\infty))$;
\item For any $\phi\in C_c^\infty([0,T)\times D_\e; \R^2)$,
\begin{align}\label{wkMom}
&\int_0^T\int_{D_\e} \rho \vb u \cdot \del_t \phi \dd x \dd t + \int_0^T\int_{D_\e} \rho \vb u\otimes \vb u : \nabla \phi \dd x \dd t + \int_0^T\int_{D_\e} \rho^\gamma\div \phi \dd x \dd t \\
&- \int_0^T\int_{D_\e} \mathbb{S}(\nabla \vb u):\nabla \phi \dd x \dd t + \int_0^T\int_{D_\e} \rho \vb f \cdot \phi \dd x \dd t = -\int_{D_\e} \vb q_0 \cdot \phi(0,\cdot) \dd x;
\end{align}

\item For almost any $\tau \in [0,T]$, the energy inequality holds:
\begin{align}\label{EI}
&\int_{D_\e} \frac12 \rho |\vb u|^2(\tau,\cdot) + \frac{\rho^\gamma(\tau,\cdot)}{\gamma-1} \dd x + \int_0^\tau \int_{D_\e} \mathbb{S}(\nabla\vb u):\nabla\vb u \dd x \dd t \\
&\quad \leq \int_{D_\e} \frac{|\vb q_0|^2}{2\rho_0} + \frac{\rho_0^\gamma}{\gamma-1} \dd x + \int_0^\tau \int_{D_\e} \rho \vb f\cdot\vb u \dd x \dd t.
\end{align}
\end{itemize}
\end{defin}

Regarding existence of weak solutions, we have the following
\begin{theorem}[{\cite[Theorem~1.1 and Section~5]{FeireislNovotnyPetzeltova2001}}]
Let $D_\e\subset \R^2$ be a bounded domain with smooth boundary, $\gamma>1$, $T>0$ be given. Let the initial data satisfy \eqref{init}. Then, there exists a finite energy weak solution $(\rho,\vu)$ to system \eqref{NSE} in the sense of Definition~\ref{def1}.
\end{theorem}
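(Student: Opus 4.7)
The plan is to follow the classical three-level approximation scheme of Feireisl--Novotn\'y--Petzeltov\'a. First I would fix two small parameters $\e,\delta>0$ and consider the regularized system obtained by adding an artificial viscosity term $\e\Delta\rho$ to the continuity equation (equipped with homogeneous Neumann data for $\rho$) and augmenting the pressure by an extra term $\delta\rho^\beta$, with $\beta$ large enough to guarantee sufficient integrability of the density. For this regularized system, approximate solutions would be produced by a Galerkin scheme on a smooth basis of $W_0^{1,2}(D_\e)\cap W^{2,q}(D_\e)$, with local-in-time existence given by a Schauder fixed-point argument and global extension via the associated energy estimate.

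Next I would pass to the limit in the Galerkin dimension and then let $\e\to 0$. Both limits rely on the uniform energy inequality, the parabolic regularization of $\rho$ available as long as $\e>0$, and the DiPerna--Lions theory of renormalized solutions of the continuity equation. The central analytic ingredient is the \emph{effective viscous flux identity}, which makes it possible to pass to the limit in the product $\rho^\gamma\,\div\vu$; combined with a monotonicity argument based on the oscillation defect measure, this upgrades the weak convergence of the density to strong convergence.

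The most delicate step, and what I expect to be the main obstacle, is the vanishing artificial pressure limit $\delta\to 0$. There the density bound deteriorates to $\rho\in L^\infty(0,T;L^\gamma(D_\e))$, and one must first establish the extra integrability $\rho\in L^{\gamma+\theta}((0,T)\times D_\e)$ for some $\theta>0$ by testing the momentum equation with the Bogovski\u i operator applied to $\rho^\theta-\langle \rho^\theta\rangle$. Once this is in hand, the effective viscous flux identity together with the renormalized continuity equation yield strong convergence of $\rho_\delta$, and hence convergence of $\rho_\delta^\gamma$ to $\rho^\gamma$. In two dimensions the assumption $\gamma>1$ is sufficient throughout, thanks to the embedding $W_0^{1,2}(D_\e)\hookrightarrow L^q(D_\e)$ for every finite $q$, which replaces the $\gamma>\frac{3}{2}$ restriction from the three-dimensional theory. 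Finally, the limiting energy inequality is obtained by weak lower semicontinuity from the analogous inequalities at each approximation level, and the compatibility of the initial data is propagated through each passage to the limit.
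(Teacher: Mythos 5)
The paper does not prove this statement; it is cited verbatim from \cite{FeireislNovotnyPetzeltova2001}, with Section~5 of that reference supplying the two-dimensional adaptation to $\gamma>1$. Your sketch is a faithful high-level summary of the Feireisl--Novotn\'y--Petzeltov\'a existence proof: the three-level approximation scheme (Galerkin basis, artificial viscosity $\e\Delta\rho$ with homogeneous Neumann data, artificial pressure $\delta\rho^\beta$), the effective viscous flux identity combined with the oscillation defect measure to upgrade weak to strong convergence of the density, the Bogovski\u{\i}-based improved pressure integrability needed to survive the $\delta\to 0$ limit, and the correct observation that the two-dimensional Sobolev embedding $W_0^{1,2}\hookrightarrow L^q$ for all finite $q$ is what allows the relaxation from $\gamma>3/2$ (the 3D threshold of Theorem~1.1 in that paper) down to $\gamma>1$.
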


We are now in the position to state our main result in this paper.
\begin{theorem}\label{thm1}
Let $D\subset\R^2$ be a bounded domain with smooth boundary, $\{z_i^\e\}_{i\in\N}\subset\R^2$ be a collection of points satisfying \eqref{defKeps} and \eqref{scKe}, $\alpha>2$, and $D_\e$ be defined as in \eqref{defDeps}. Let $\gamma>2$, $(\rho_\e,\vb u_\e)$ be a sequence of finite energy weak solutions to system \eqref{NSE} emanating from the initial data $(\rho_{\e0}, \vb q_{\e0})$, and assume
\begin{align}\label{initConv}
\tilde{\rho}_{\e0}\to \rho_0 \text{ in } L^\gamma(D), \quad  \frac{|\tilde{\vb q}_{\e0}|^2}{\tilde{\rho}_{\e0}} \to \frac{|\vb q_0|^2}{\rho_0} \text{ in } L^1(D).
\end{align}
Then, there exists a subsequence (not relabelled) such that
\begin{align*}
\tilde{\rho}_\e\weak^* \rho \text{ weakly$^*$ in } L^\infty(0,T;L^\gamma(D)),\quad \tilde{\vu}_\e\weak \vu \text{ weakly in } L^2(0,T;W_0^{1,2}(D)),
\end{align*}
where $(\rho,\vu)$ is a solution to system \eqref{NSE} in the domain $(0,T)\times D$ with initial conditions $\rho(0)=\rho_0$ and $(\rho\vb u)(0)=\vb q_0$.
\end{theorem}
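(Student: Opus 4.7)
The energy inequality \eqref{EI} together with the initial-data convergence \eqref{initConv} yields the $\e$-uniform bounds
\[
\|\tilde\rho_\e\|_{L^\infty(0,T;L^\gamma(D))} + \|\sqrt{\tilde\rho_\e}\,\tilde\vu_\e\|_{L^\infty(0,T;L^2(D))} + \|\nabla \tilde\vu_\e\|_{L^2((0,T)\times D)} \lesssim 1,
\]
upon extending by zero (which preserves $W^{1,2}_0$) and using the uniform Korn inequality for Dirichlet data on $D_\e$. A Bogovski\u\i-type operator on $D_\e$ with $\e$-independent norm, available in the very tiny holes regime $\alpha>2$, then upgrades the pressure to $\|\tilde\rho_\e\|_{L^{\gamma+\theta}((0,T)\times D)} \lesssim 1$ for some $\theta=\theta(\gamma)>0$, and the renormalized continuity equation \eqref{renCE} viewed on all of $\R^2$ supplies temporal compactness for $\tilde\rho_\e$ in $C_{\mathrm{weak}}([0,T];L^\gamma(D))$. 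Extracting subsequences produces the weak limits $(\rho,\vu)$ of the statement.

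\textbf{Step 2: Construction of many-hole cut-offs.}
Adapting the single-hole construction of \cite{Bravin2022} to the entire configuration, build a scalar function $g_\e\in W^{1,\infty}(\R^2)$ with $0\leq g_\e\leq 1$, $g_\e\equiv 0$ on $\bigcup_{i\in K_\e}(z_i^\e + a_\e F)$, $g_\e\equiv 1$ outside $\bigcup_{i\in K_\e} B_\e(z_i^\e)$, and satisfying
\[
\|\nabla g_\e\|_{L^2(D)}^2 \lesssim \e^{\alpha-2}, \qquad \|1-g_\e\|_{L^p(D)} \to 0 \text{ for every } p<\infty.
\]
The estimates follow from the 2D capacity bound $\Cap(B_{a_\e},B_\e) = 2\pi/\log(\e/a_\e) \lesssim \e^\alpha$, the fact that the cells $B_\e(z_i^\e)$ are pairwise disjoint and lie in $D$ by \eqref{defKeps}, and the population control $|K_\e|\lesssim \e^{-2}$ from \eqref{scKe}. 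The simple-connectedness of $F$ is used to extend the radial logarithmic capacity potential through the possibly non-circular obstacle shape. The subcriticality $\alpha>2$ is used decisively here.

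\textbf{Step 3: Limit passage in the momentum equation.}
For any $\phi\in C_c^\infty([0,T)\times D;\R^2)$, the product $g_\e\phi$ is admissible in \eqref{wkMom}. Split each integral into a principal part (where $g_\e$ is a scalar multiplier) and a remainder (where $\nabla g_\e$ appears). Each remainder is dominated by H\"older's inequality against the bounds of Step~1 and the smallness of $g_\e$ from Step~2; the 2D Sobolev embedding $W^{1,2}_0\hookrightarrow L^q$ for all $q<\infty$, together with the improved pressure bound and the assumption $\gamma>2$, suffices to make the convective, pressure, and viscous remainders vanish. Because $g_\e\to 1$ a.e., the principal parts converge to their unperforated counterparts, producing the target momentum equation up to identification of the pressure nonlinearity. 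Compactness of $\tilde\rho_\e\tilde\vu_\e$ in a negative-order Sobolev space in space-time then gives $\tilde\rho_\e\tilde\vu_\e \otimes \tilde\vu_\e \weak \rho\vu\otimes\vu$.

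\textbf{Step 4: Effective viscous flux and main obstacle.}
What remains, and is the most delicate step, is the strong convergence $\tilde\rho_\e\to\rho$ needed to identify $\overline{\tilde\rho_\e^\gamma} = \rho^\gamma$. Following Feireisl--Lions, test the momentum equation with $g_\e\,\phi(t)\,\nabla\Delta^{-1}[\mathbf{1}_D\, b(\tilde\rho_\e)]$ for $\phi\in C_c^\infty(0,T)$ and suitable $b$; the cut-off $g_\e$ ensures admissibility in $D_\e$, while the $\nabla g_\e$-remainders are again negligible by the estimates of Step~2–3. Combined with the renormalized continuity equation \eqref{renCE} on $\R^2$, the classical commutator/Div-Curl computation yields the effective viscous flux identity, and thereby strong convergence of $\tilde\rho_\e$ in $L^1$ by the standard monotonicity/renormalization argument. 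The core obstacle throughout is Step~2: because the 2D harmonic capacity is logarithmic rather than algebraic, no linear Lipschitz cut-off with $\nabla g_\e\to 0$ in $L^2$ and $g_\e\to 1$ in $L^p$ can be built as in three dimensions; the refined logarithmic cut-off of \cite{Bravin2022}, extended here to an arbitrary admissible configuration of holes, is precisely the device that makes Steps~3 and~4 go through under the subcritical assumption $\alpha>2$.
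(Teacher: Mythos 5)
Your Steps 1 and 4 correctly identify the overall strategy, but Step 2 introduces a \emph{scalar} cut-off $g_\e$, and this is not what \cite{Bravin2022} does and not what this problem permits; the resulting Step 3 has a genuine gap in the pressure term. Testing \eqref{wkMom} with $g_\e\phi$ produces
\begin{align*}
\int_0^T\int_D \tilde\rho_\e^\gamma \div(g_\e\phi)\dd x \dd t = \int_0^T\int_D \tilde\rho_\e^\gamma g_\e \div\phi \dd x \dd t + \int_0^T\int_D \tilde\rho_\e^\gamma \,\nabla g_\e\cdot\phi \dd x \dd t,
\end{align*}
and the second integral cannot be made to vanish. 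By the improved pressure estimate (Lemma \ref{lem:press}), $\tilde\rho_\e^\gamma$ is bounded in $L^{(\gamma+\theta)/\gamma}((0,T)\times D)$ with $\theta<\gamma-1$, whose H\"older conjugate is $p=(\gamma+\theta)/\theta$; as $\theta\uparrow\gamma-1$ this tends to $(2\gamma-1)/(\gamma-1)$, which is \emph{strictly greater than} $2$ for every $\gamma>1$ and only approaches $2$ as $\gamma\to\infty$. So you would need $\|\nabla g_\e\|_{L^p(D)}\to 0$ for some $p>2$. But in the plane the $p$-capacity of a point is positive for $p>2$: any admissible scalar cut-off vanishing on $B_{a_\e}(z_i^\e)$ and identically $1$ off a cell of radius $\lesssim\e$ satisfies $\int |\nabla g_\e|^p \gtrsim c>0$ per hole uniformly in $a_\e$, and summing over $|K_\e|\sim\e^{-2}$ holes gives blow-up, not decay. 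This is precisely the obstruction that makes the 2D problem different from its 3D analogue, and it kills the pressure remainder in your scheme regardless of how you tune $\varpi_\e$; it also contradicts your claim that the pressure remainder vanishes under the improved bound and $\gamma>2$.

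The device of \cite{Bravin2022}, used in the paper and generalized to many holes, is a \emph{matrix-valued, divergence-free} cut-off $\Phi_\e = \mathbb{I}-\sum_{i}(\eta_\e^i\mathbb{I}-\nabla^\perp\eta_\e^i\otimes(x-z_i^\e)^\perp)$ with $\div\Phi_\e=0$. Then $\div(\Phi_\e\phi)=\Phi_\e:\nabla\phi$ and the pressure integral collapses to $\int\tilde\rho_\e^\gamma\,\Phi_\e:\nabla\phi$, so $\nabla\Phi_\e$ never meets the pressure at all; only $\Phi_\e\to\mathbb{I}$ in $L^q$ is needed, which holds for every finite $q$. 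The gradient $\nabla\Phi_\e$ then appears only in the viscous and convective remainders, both of which close with the $L^2$ bound $\|\nabla\Phi_\e\|_{L^2}^2\lesssim\e^{\alpha-2}$ (viscous: because $\mathbb{S}(\nabla\vu_\e)\in L^2_{t,x}$; convective: because $\gamma>2$ places $\tilde\rho_\e\tilde\vu_\e\otimes\tilde\vu_\e$ in $L^1_tL^2_x$). That is exactly where the restriction $\gamma>2$ enters, as your Step 3 correctly senses but mis-locates. So you have the right skeleton, the right uniform estimates, the right capacity scaling, and the right intuition about where $\gamma>2$ is needed; what is missing is the recognition that the cut-off must be matrix-valued and divergence-free in order to remove $\nabla\Phi_\e$ from the pressure term, because that is the one term for which no $L^2$ smallness will suffice.
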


The restriction $\gamma>2$ is necessary to ensure that the convective term converges in the right way, see Remark \ref{rem1}. As a matter of fact, this restriction seems to be optimal in the sense that it cannot be lowered by the method presented here (see also \cite[Section~7]{OschmannPokorny2023} for higher dimensions $d \geq 3$).

\section{Uniform bounds}\label{sec:Bds}
\begin{lemma}
Under the assumptions of Theorem \ref{thm1}, we have
\begin{align*}
\|\rho_\e\|_{L^\infty(0,T;L^\gamma(D_\e))} + \|\sqrt{\rho_\e}\vb u_\e\|_{L^\infty(0,T;L^2(D_\e))} + \|\vb u_\e\|_{L^2(0,T;W_0^{1,2}(D_\e))} \leq C
\end{align*}
for some constant $C>0$ independent of $\e$.
\end{lemma}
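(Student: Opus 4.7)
The plan is the standard energy estimate for compressible Navier--Stokes, combined with a Gronwall-type absorption of the forcing term and a zero-extension trick to make the functional-analytic constants independent of $\e$.

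First I would note that the initial energy is uniformly bounded: by \eqref{initConv}, $\|\tilde{\rho}_{\e 0}\|_{L^\gamma(D)} \to \|\rho_0\|_{L^\gamma(D)}$ and $\int_D |\tilde{\vb q}_{\e 0}|^2/\tilde{\rho}_{\e 0} \to \int_D |\vb q_0|^2/\rho_0$, so
\[
\int_{D_\e} \frac{|\vb q_{\e 0}|^2}{2 \rho_{\e 0}} + \frac{\rho_{\e 0}^\gamma}{\gamma-1} \dd x \leq C,
\]
uniformly in $\e$. Mass conservation (either as stated in Definition~\ref{def1} or obtained from \eqref{renCE} with $b(s)=s$) and the $L^\gamma \hookrightarrow L^1$ embedding on the bounded domain $D_\e$ then give $\|\rho_\e(t,\cdot)\|_{L^1(D_\e)} = \|\rho_{\e 0}\|_{L^1(D_\e)} \leq C$ for all $t$.

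Next I would estimate the forcing term in \eqref{EI} by Cauchy--Schwarz:
\[
\left| \int_{D_\e} \rho_\e \vb f \cdot \vb u_\e \dd x \right| \leq \|\vb f\|_{L^\infty} \int_{D_\e} \sqrt{\rho_\e} \cdot \sqrt{\rho_\e}\, |\vb u_\e| \dd x \leq \|\vb f\|_{L^\infty} \|\rho_\e\|_{L^1(D_\e)}^{1/2} \| \sqrt{\rho_\e} \vb u_\e \|_{L^2(D_\e)}.
\]
Writing $E_\e(\tau)$ for the left-hand side of \eqref{EI} (without the dissipation), the energy inequality together with the uniform mass bound thus yields
\[
E_\e(\tau) \leq C + C \int_0^\tau \sqrt{E_\e(t)} \dd t,
\]
and a quadratic-Gronwall argument gives $E_\e(\tau) \leq C(T)$ uniformly in $\e$ and $\tau \in [0,T]$. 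This immediately delivers the bounds on $\|\rho_\e\|_{L^\infty(0,T;L^\gamma(D_\e))}$ and on $\|\sqrt{\rho_\e}\vb u_\e\|_{L^\infty(0,T;L^2(D_\e))}$.

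Finally, feeding this back into \eqref{EI} controls the dissipation $\int_0^T\int_{D_\e} \mathbb{S}(\nabla \vb u_\e) : \nabla \vb u_\e \dd x \dd t \leq C$. Korn's inequality (for functions in $W^{1,2}_0$) yields $\int \mathbb{S}(\nabla \vb u_\e) : \nabla \vb u_\e \gtrsim \|\nabla \vb u_\e\|_{L^2(D_\e)}^2$, and since $\vb u_\e$ extends by zero to a function in $W^{1,2}_0(D)$, Poincar\'e's inequality on the fixed bounded domain $D$ gives $\|\vb u_\e\|_{L^2(D_\e)} \lesssim \|\nabla \vb u_\e\|_{L^2(D_\e)}$, with a constant depending only on $D$. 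Combining, we get the required $W^{1,2}_0$ bound. The only slightly delicate point is precisely this $\e$-independence of the Korn and Poincar\'e constants on the shrinking domain $D_\e$, which is resolved by working with zero extensions to $D$.
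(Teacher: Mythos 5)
Your proposal is correct and follows essentially the same route as the paper: uniform bounds on the initial energy and on $\|\rho_\e\|_{L^1}$ from mass conservation, Cauchy--Schwarz on the forcing term, a Gronwall-type closure, and finally Korn and Poincar\'e for the $W^{1,2}_0$ bound. The only cosmetic difference is that you keep the $\sqrt{E_\e}$ factor and invoke a nonlinear (quadratic) Gronwall inequality, whereas the paper first applies Young's inequality pointwise in time to reduce to the standard linear Gronwall lemma; both closures are elementary and deliver the same bound. Your explicit remark that the Korn and Poincar\'e constants are $\e$-independent because $\vb u_\e\in W^{1,2}_0(D_\e)$ extends by zero to $W^{1,2}_0(D)$ is a useful clarification that the paper leaves implicit.
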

\begin{proof}
By the energy inequality \eqref{EI} and the assumptions on the initial data \eqref{initConv}, we obtain
\begin{align*}
\int_{D_\e} \frac12 \rho_\e |\vb u_\e|^2(\tau,\cdot) + \frac{\rho_\e^\gamma(\tau,\cdot)}{\gamma-1} \dd x + \int_0^\tau \int_{D_\e} \mathbb{S}(\nabla\vb u_\e):\nabla\vb u_\e \dd x \dd t \leq C + \int_0^\tau\int_{D_\e} \rho_\e \vb f \cdot \vb u_\e \dd x \dd t.
\end{align*}
Note further that the conservation of mass and the convergence of the initial data $\rho_{\e0}$ yields
\begin{align*}
\|\rho_\e\|_{L^\infty(0,T;L^1(D_\e))}=\|\rho_{\e0}\|_{L^1(D_\e)}\leq |D_\e|^{1-\frac{1}{\gamma}} \|\tilde{\rho}_{\e0}\|_{L^\gamma(D)}\leq C
\end{align*}
since $|D\setminus D_\e|\leq C \e^{-2} a_\e^2\to 0$, hence $|D_\e|\leq C$. Using now H\"older's and Young's inequality, we get for almost any $\tau\in [0,T]$
\begin{align*}
\int_{D_\e} \rho_\e\vb f \cdot \vb u_\e(\tau) \dd x \dd t \leq C \|\rho_\e(\tau)\|_{L^1(D_\e)}^\frac12 \|\rho_\e |\vb u_\e|^2(\tau)\|_{L^1(D_\e)}^\frac12 \leq C + \frac12 \|\rho_\e |\vb u_\e|^2(\tau)\|_{L^1(D_\e)}.
\end{align*}
Thus, we end up with the inequality
\begin{align*}
\int_{D_\e} \frac12 \rho_\e |\vb u_\e|^2(\tau,\cdot) + \frac{\rho_\e^\gamma(\tau,\cdot)}{\gamma-1} \dd x + \int_0^\tau \int_{D_\e} \mathbb{S}(\nabla\vb u_\e):\nabla\vb u_\e \dd x \dd t \leq C + \int_0^\tau \int_{D_\e} \frac12 \rho_\e |\vu_\e|^2 \dd x \dd t.
\end{align*}
Using finally Gr\"onwall's, Korn's, and Poincar\'e's inequality, we conclude easily.
\end{proof}

Moreover, we can improve the pressure regularity, the proof of which follows the same lines as \cite[Appendix~B]{Bravin2022} (see also Appendix \ref{apA}).
\begin{lemma}\label{lem:press}
For any $\theta<\gamma-1$,
\begin{align*}
\int_0^T\int_{D_\e} \rho_\e^{\gamma+\theta} \dd x \dd t \leq C.
\end{align*}
\end{lemma}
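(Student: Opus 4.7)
The plan is to run the classical Bogovskii improved-pressure argument, adapted to the two-dimensional perforated geometry in the spirit of \cite[Appendix~B]{Bravin2022}.

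First, for a.e.~$t\in(0,T)$ I would construct a vector field $\phi_\e(t,\cdot)\in W_0^{1,p}(D_\e;\R^2)$ with $p=\gamma/\theta$ solving
\begin{align*}
\div\phi_\e(t,\cdot)=\rho_\e^\theta(t,\cdot)-\frac{1}{|D_\e|}\int_{D_\e}\rho_\e^\theta(t,\cdot)\dd x\quad\text{in }D_\e,
\end{align*}
together with the uniform bound $\|\phi_\e(t,\cdot)\|_{W^{1,p}(D_\e)}\lesssim \|\rho_\e^\theta(t,\cdot)\|_{L^p(D_\e)}$, where the implicit constant does not depend on $\e$. I would then use $\psi(t)\phi_\e(t,x)$ with $\psi\in C_c^\infty(0,T)$ as a test function in the weak momentum equation \eqref{wkMom} (after standard time-mollification to justify the use of a time-dependent test function) and eventually let $\psi\nearrow\mathbf{1}_{(0,T)}$.

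By construction of $\phi_\e$, the pressure term produces precisely $\int_0^T\psi\int_{D_\e}\rho_\e^{\gamma+\theta}\dd x\dd t$, minus the harmless correction $\int_0^T\psi\,\overline{\rho_\e^\theta}\int_{D_\e}\rho_\e^\gamma\dd x\dd t$, which is bounded thanks to the conservation of mass and the uniform $L^\infty_tL^\gamma_x$-bound on $\rho_\e$ from the previous lemma. The remaining contributions --- inertial $\rho_\e\vb u_\e\otimes\vb u_\e:\nabla\phi_\e$, viscous $\mathbb{S}(\nabla\vb u_\e):\nabla\phi_\e$, time-derivative $\rho_\e\vb u_\e\cdot\phi_\e\psi'$ and forcing $\rho_\e\vb f\cdot\phi_\e\psi$ --- are controlled by H\"older's inequality combined with the uniform bounds of Section~\ref{sec:Bds}, the two-dimensional Sobolev embedding $W_0^{1,2}(D_\e)\hookrightarrow L^q(D_\e)$ valid for every finite $q$, and the uniform $W^{1,p}$-bound on $\phi_\e$. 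The restriction $\theta<\gamma-1$ enters precisely at the convective term: with $\rho_\e\vb u_\e\otimes\vb u_\e\in L^1_tL^r_x$ for every $r<\gamma$ and $\nabla\phi_\e\in L^\infty_tL^{\gamma/\theta}_x$, the two factors are in dual Lebesgue spaces exactly when $(\gamma/\theta)'<\gamma$, i.e.~$\theta<\gamma-1$.

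The main obstacle is the construction of $\phi_\e$ with a constant \emph{independent of $\e$}, since a naive application of Bogovskii on the perforated domain $D_\e$ yields an operator norm depending on the complicated geometry. Following Bravin, one first solves the divergence equation on the smooth domain $D$ to obtain a field $\phi^0$ with $\|\phi^0\|_{W^{1,p}(D)}\lesssim\|\rho_\e^\theta\|_{L^p(D_\e)}$, then multiplies by a product of cut-off functions each vanishing on a hole $z_i^\e+a_\e F$ and equal to one outside a slightly larger disc, and finally compensates the divergence error created by the cut-offs by a second, localised Bogovskii step on each annulus around a hole. The subtle point, peculiar to two dimensions, is that the cut-offs must be chosen logarithmically (rather than linearly, as in three dimensions); the subcritical scaling $\alpha>2$ together with $|K_\e|\lesssim\e^{-2}$ and $a_\e=e^{-\e^{-\alpha}}$ is exactly what is needed to make all correction contributions vanish in the limit, yielding the desired uniform $W^{1,p}$-bound on $\phi_\e$ and, consequently, the claim.
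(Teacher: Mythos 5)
Your strategy is the same as the paper's: test the momentum equation (after time-regularization) with $\xi(t)\B_\e\bigl[\rho_\e^\theta-\frac{1}{|D_\e|}\int_{D_\e}\rho_\e^\theta\bigr]$, where $\B_\e$ is a Bogovski\u{\i} operator on $D_\e$ with $\e$-uniform norm, and the restriction $\theta<\gamma-1$ is indeed imposed by the convective term. One remark on the description of the restriction operator: what makes the construction work in two dimensions is not simply multiplying the Bogovski\u{\i} field on $D$ by logarithmic cut-offs; if one does only that, the term $\nabla\eta_\e^i\cdot\phi^0$ is not controlled by $\|\phi^0\|_{W^{1,p}}$. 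Bravin's device, reproduced in the paper's operator $L_{i,\e}$, first subtracts the local average $\frac{1}{|B_{\varpi_\e a_\e}(z_i^\e)|}\int\phi^0$ so that Poincar\'e on the small ball produces the compensating factor $\varpi_\e a_\e$; the cut-off is applied to the difference, and the average is reattached with the log profile $\eta_\e^i$.

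There is a genuine (though easily repaired) gap in the asserted bound $\|\phi_\e(t,\cdot)\|_{W^{1,p}(D_\e)}\lesssim\|\rho_\e^\theta(t,\cdot)\|_{L^p(D_\e)}$ with an $\e$-independent constant for $p=\gamma/\theta$ and \emph{arbitrary} $\theta<\gamma-1$. In two dimensions the logarithmic profile has $\|\nabla\eta_\e^i\|_{L^q}^q\sim a_\e^{2-q}|\log\varpi_\e|^{-q}|\varpi_\e^{2-q}-1|$, which diverges for $q>2$ because $a_\e^{2-q}=a_\e^{-(q-2)}\to\infty$; accordingly, the Bogovski\u{\i} constant $1+C(\e,q)$ constructed in the paper is shown to be uniform in $\e$ only for $q\le 2$ (this is a genuinely 2D obstruction — in 3D one can afford higher exponents). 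Your argument therefore needs $p=\gamma/\theta\le 2$, i.e.\ $\theta\ge\gamma/2$, which is compatible with $\theta<\gamma-1$ precisely because $\gamma>2$. So one proves the bound for $\theta$ in the nonempty window $[\gamma/2,\gamma-1)$, and then the statement for the remaining $\theta<\gamma/2$ follows trivially from $\rho_\e^{\gamma+\theta}\le 1+\rho_\e^{\gamma+\theta_0}$ on the bounded space-time cylinder. With this restriction made explicit, your proposal coincides with the paper's proof.
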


\section{Suitable test functions}
In order to pass to the limit in the momentum equation, we need an appropriate test function obtained from an arbitrary function $\phi\in C_c^\infty([0,T)\times D)$. To this end, we have to modify $\phi$ such that it vanishes on the holes. As a consequence of our construction, we will make $\phi$ vanish on a slightly larger set, keeping fixed the scaling and number of the holes. First, by the definition of $D_\e$ in \eqref{defDeps} and the holes as $a_\e F$ for some compact set $F\subset B_1(0)$, we have $z_i^\e+a_\e F\subset B_{a_\e}(z_i^\e)$. Now, we define a ``single hole'' cut-off function via
\begin{align}\label{aeps}
\begin{split}
\mathfrak{y}_\e(r)=\begin{cases}
1 & \text{if } 0\leq r < a_\e,\\
\frac{\log(\varpi_\e a_\e)-\log(r)}{\log(\varpi_\e a_\e)-\log(a_\e)} & a_\e \leq r < \varpi_\e a_\e,\\ 0 & \text{else},
\end{cases}
\end{split}
\end{align}
where $1<\varpi_\e\to \infty$ such that $\varpi_\e a_\e\leq \e$ will be chosen later. After passing to radial coordinates and denoting $x_i$ the $i$-th component of the vector $x$, it is easy to see that for any $1\leq q < \infty$, we have
\begin{align}\label{est0}
\begin{split}
\|\mathfrak{y}_\e\|_{L^\infty(\R^2)}+\|\nabla \mathfrak{y}_\e x_i\|_{L^\infty(\R^2)} &\lesssim 1,\\
\|\nabla \mathfrak{y}_\e\|_{L^q(\R^2)}^q + \|\nabla^2 \mathfrak{y}_\e x_i\|_{L^q(\R^2)}^q &\lesssim \begin{cases}
\frac{a_\e^{2-q}}{|\log \varpi_\e|^q} |\varpi_\e^{2-q}-1| & \text{if } q\neq 2,\\
|\log \varpi_\e|^{-1} & \text{if } q=2.
\end{cases}
\end{split}
\end{align}

To define an appropriate cut-off function for multiple holes in the whole of $D$, we follow an idea of Bravin in \cite{Bravin2022} for a single hole in $\R^2$. Recall the definitions of $z_i^\e$ and $K_\e$ in \eqref{defKeps}. We set $\eta_\e^i(x) = \mathfrak{y}_\e(|x-z_i^\e|)$ for $x\in D$, and define the matrix-valued cut-off function
\begin{align*}
\Phi_\e = \mathbb{I}-\sum_{i\in K_\e} \eta_\e^i\mathbb{I} + \nabla^\perp \eta_\e^i\otimes (x-z_i^\e)^\perp,
\end{align*}
where $x^\perp=(-x_2, x_1)$ and $\nabla^\perp=(-\del_2, \del_1)$. Note especially that
\begin{align*}
\div \Phi_\e = -\sum_{i\in K_\e} \div\big(\nabla^\perp [\eta_\e^i(x-z_i^\e)^\perp] \big)=0,
\end{align*}
where the divergence is taken row-wise as $\div \Phi_\e=\div(\Phi_\e^T \vb e_1) \vb e_1 + \div(\Phi_\e^T \vb e_2) \vb e_2$. We summarize the properties of $\Phi_\e$ in the following

\begin{lemma}\label{lem2}
The function $\Phi_\e$ fulfils
\begin{align*}
\Phi_\e &\in W^{1,q}(D)\cap L^\infty(D) \text{ for any } q\geq 1,\\
\Phi_\e &=0 \text{ on } D\setminus D_\e,\\
\Phi_\e &=\mathbb{I} \text{ on } D\setminus \bigcup_{i\in K_\e} B_{\varpi_\e a_\e}(z_i^\e).
\end{align*}
Moreover, for any $1\leq q\leq \infty$,
\begin{align}\label{ph1}
\begin{split}
\|\Phi_\e-\mathbb{I}\|_{L^q(D)} &\lesssim \e^{-\frac2q} (\varpi_\e a_\e)^\frac2q,\\
\|\nabla\Phi_\e\|_{L^2(D)} &\lesssim \e^{-1}|\log \varpi_\e|^{-\frac12},
\end{split}
\end{align}
with the convention $1/\infty=0$. In turn, for any $\phi\in C_c^\infty(D)$ and any $q\leq 2$,
\begin{align}\label{ph2}
\|\nabla (\Phi_\e \phi)-\Phi_\e \nabla\phi\|_{L^q(D)}\lesssim \e^{-1} |\log \varpi_\e|^{-\frac12} \|\phi\|_{L^{\frac{2q}{2-q}}(D)},
\end{align}
with the convention $1/0=\infty$.
\end{lemma}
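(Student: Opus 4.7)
The plan is to exploit the disjointness of the supports of the individual scalar cut-offs $\eta_\e^i$---a consequence of $\varpi_\e a_\e \leq \e$ together with $|z_i^\e - z_j^\e| \geq 2\e$ from \eqref{defKeps}---so that every global bound reduces to a single-hole computation multiplied by $|K_\e| \lesssim \e^{-2}$. Once this observation is in hand, everything boils down to the scalar estimates \eqref{est0}.

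First I would dispose of the pointwise identities. On each small ball $B_{a_\e}(z_i^\e) \supset z_i^\e + a_\e F$ one has $\eta_\e^i \equiv 1$, $\nabla \eta_\e^i \equiv 0$, and by disjointness $\eta_\e^j \equiv 0$ for $j \neq i$, so $\Phi_\e = \mathbb{I} - \mathbb{I} - 0 = 0$; off $\bigcup_{i\in K_\e} B_{\varpi_\e a_\e}(z_i^\e)$ every $\eta_\e^i$ and its derivatives vanish and $\Phi_\e = \mathbb{I}$. The regularity $\Phi_\e \in W^{1,q}(D) \cap L^\infty(D)$ then follows from \eqref{est0}: the first line gives the $L^\infty$ bound on $\Phi_\e$ itself, while the second line (finite for every $q \geq 1$) controls the two types of terms appearing in $\nabla \Phi_\e$, namely $\nabla \eta_\e^i$ and $\nabla^2 \eta_\e^i \cdot (x - z_i^\e)$.

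For \eqref{ph1}, disjointness of supports yields
\begin{align*}
\|\Phi_\e - \mathbb{I}\|_{L^q(D)}^q = \sum_{i\in K_\e} \int_{B_{\varpi_\e a_\e}(z_i^\e)} \left|\eta_\e^i\,\mathbb{I} + \nabla^\perp \eta_\e^i \otimes (x - z_i^\e)^\perp\right|^q \dd x,
\end{align*}
and the integrand is uniformly bounded by \eqref{est0}, so each summand is $\lesssim (\varpi_\e a_\e)^2$; combined with $|K_\e| \lesssim \e^{-2}$ and the $q$-th root one obtains the first estimate, the $q = \infty$ case being covered directly via the convention $1/\infty = 0$. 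For the $L^2$ gradient bound I would differentiate to get
\begin{align*}
\nabla \Phi_\e = -\sum_{i\in K_\e} \left[\nabla \eta_\e^i \otimes \mathbb{I} + \nabla\!\left(\nabla^\perp \eta_\e^i \otimes (x - z_i^\e)^\perp\right)\right],
\end{align*}
whose entries are controlled by \eqref{est0} with $q = 2$, contributing $\lesssim |\log \varpi_\e|^{-1}$ per hole; summing over the $\lesssim \e^{-2}$ holes and taking the square root delivers $\e^{-1}|\log \varpi_\e|^{-1/2}$.

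Finally, \eqref{ph2} follows from the Leibniz identity $\nabla(\Phi_\e \phi) - \Phi_\e \nabla \phi = (\nabla \Phi_\e)\phi$ (under the appropriate contraction) together with H\"older's inequality with exponents $2$ and $2q/(2-q)$ applied to the gradient bound just established, the endpoint $q = 2$ corresponding to the convention $1/0 = \infty$. I do not anticipate any genuine obstacle: the lemma is essentially bookkeeping, and the only real ingredient is the correction term $\nabla^\perp \eta_\e^i \otimes (x - z_i^\e)^\perp$ in the definition of $\Phi_\e$, which both enforces $\div \Phi_\e = 0$ and keeps $\Phi_\e - \mathbb{I}$ uniformly bounded (since $\nabla^\perp \mathfrak{y}_\e \cdot x$ is bounded, whereas $\nabla^\perp \mathfrak{y}_\e$ alone is not).
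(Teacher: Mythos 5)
Your proof is correct and follows essentially the same route as the paper: disjointness of the supports of the $\eta_\e^i$ (which holds since $\varpi_\e a_\e \le \e$ and the centers are $2\e$-separated), the single-hole estimates \eqref{est0}, the bound $|K_\e|\lesssim\e^{-2}$, and H\"older for \eqref{ph2}. The paper's proof is terser — it does not write out the sum over holes explicitly — but the content is identical; you have simply spelled out the bookkeeping.
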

\begin{proof}
By the definition of $\Phi_\e$, we immediately see that $\Phi_\e=0$ on the holes as well as $\Phi_\e=\mathbb{I}$ outside every $B_{\varpi_\e a_\e} (z_i^\e)$. Further, noticing that the holes are disjoint and their number in $D$ is bounded by $C\e^{-2}$ by \eqref{scKe}, we easily conclude \eqref{ph1} by using \eqref{est0}. Inequality \eqref{ph2} is a direct consequence of H\"older's inequality
\begin{align*}
&\|\nabla (\Phi_\e \phi)-\Phi_\e \nabla\phi\|_{L^q(D)}=\|(\nabla (\Phi_\e \vb e_1) \phi , \nabla (\Phi_\e \vb e_2) \phi)\|_{L^q(D)} \\
&\leq \|\nabla\Phi_\e\|_{L^2(D)} \|\phi\|_{L^{\frac{2q}{2-q}}(D)}\lesssim \e^{-1} |\log \varpi_\e|^{-\frac12} \|\phi\|_{L^{\frac{2q}{2-q}}(D)}.
\end{align*}
\end{proof}

\section{Convergences and equations in homogeneous domain}\label{sec:Conv}
In this section, we show that the functions $\rho_\e$ and $\vu_\e$ converge to functions $\rho$ and $\vu$ in a proper way, respectively, and figure out the limiting system they solve. First, note that the extended functions $\tilde{\rho}_\e$ and $\tilde{\vb u}_\e$ share the same regularity in the whole of $\R^2$ as their originating functions $\rho_\e$ and $\vb u_\e$ in $D_\e$. Hence, from the uniform bounds derived in Section~\ref{sec:Bds}, we have
\begin{align}\label{conv}
\begin{split}
\tilde{\rho}_\e &\weak \rho \text{ weakly in } L^{(2\gamma-1)^-}([0,T)\times D),\\
\tilde{\rho}_\e &\weak^* \rho \text{ weakly$^*$ in } L^\infty(0,T;L^\gamma(D)),\\
\tilde{\vb u}_\e &\weak \vb u \text{ weakly in } L^2(0,T;W_0^{1,2}(D)),
\end{split}
\end{align}
where we denoted by $(2\gamma-1)^-$ any number less than but arbitrarily close to $2\gamma-1$. Our first result concerns the extended continuity equation, which can be proven similarly to \cite[Proposition~3.3]{LuSchwarzacher2018}.
\begin{lemma}\label{lem1}
The extended functions $\tilde{\rho}_\e$ and $\tilde{\vb u}_\e$ fulfil
\begin{align*}
\del_t \tilde{\rho}_\e + \div(\tilde{\rho}_\e \tilde{\vb u}_\e)=0 \text{ in } \mathcal{D}'((0,T)\times \R^2).
\end{align*}
\end{lemma}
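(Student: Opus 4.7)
The goal is to show that for every $\psi \in C_c^\infty((0,T) \times \R^2)$,
$\int_0^T \int_{\R^2} [\tilde\rho_\e \del_t \psi + \tilde\rho_\e \tilde\vu_\e \cdot \nabla \psi] \dd x \dd t = 0$.
Since $\tilde\rho_\e$ and $\tilde\vu_\e$ vanish outside $D_\e$, this reduces to verifying the same identity with $\R^2$ replaced by $D_\e$ and the tildes removed. The continuity equation already holds on $D_\e$, but a generic $\psi$ need not vanish on $\partial D_\e$ and so is not directly admissible as a test function there. The natural strategy is to truncate $\psi$ away from $\partial D_\e$ and control the truncation error.

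Concretely, I would fix a family of smooth cutoffs $\chi_\delta \in C_c^\infty(D_\e)$ with $0 \leq \chi_\delta \leq 1$, $\chi_\delta \equiv 1$ on $\{x \in D_\e : \dist(x, \partial D_\e) > 2\delta\}$, and $|\nabla \chi_\delta(x)| \lesssim 1/\dist(x, \partial D_\e)$; such a family is standard since $\partial D_\e$ is smooth for each fixed $\e$. Then $\psi \chi_\delta \in C_c^\infty((0,T) \times D_\e)$ is admissible as a test function in the weak continuity equation on $D_\e$, yielding
$0 = \int_0^T \int_{D_\e} [\rho_\e \chi_\delta \del_t \psi + \rho_\e \vu_\e \chi_\delta \cdot \nabla \psi] \dd x \dd t + R_\delta$,
with remainder $R_\delta := \int_0^T \int_{D_\e} \rho_\e \vu_\e \psi \cdot \nabla \chi_\delta \dd x \dd t$. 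As $\delta \to 0^+$, the first two terms converge by dominated convergence to the two integrals we wish to vanish, so everything hinges on proving $R_\delta \to 0$.

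For the remainder, I would invoke Hardy's inequality on the (for fixed $\e$) smooth domain $D_\e$: since $\vu_\e \in L^2(0,T; W_0^{1,2}(D_\e))$, the quotient $|\vu_\e|/\dist(\cdot, \partial D_\e)$ belongs to $L^2((0,T) \times D_\e)$. Writing $\nabla \chi_\delta = [\dist(\cdot, \partial D_\e) \nabla \chi_\delta]/\dist(\cdot, \partial D_\e)$, the bracketed factor is uniformly bounded and tends to zero a.e.\ as $\delta \to 0$, while H\"older combined with $\rho_\e \in L^\infty(0,T; L^\gamma(D_\e))$ and $\gamma > 2$ shows that $\rho_\e |\vu_\e|/\dist(\cdot, \partial D_\e)$ lies in $L^1$. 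Dominated convergence then yields $R_\delta \to 0$. The main (and essentially only) obstacle is precisely this matching between the no-slip bound via Hardy and the pointwise decay of $\nabla \chi_\delta$ on the shrinking boundary strip; all constants are allowed to depend on $\e$ because the claim is for each fixed $\e$, which is exactly what licenses the Hardy estimate on the $\e$-dependent smooth domain $D_\e$.
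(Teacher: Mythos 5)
Your argument is essentially correct for fixed $\e$, but it does more work than the paper's framework requires and, in fact, needs a hypothesis the lemma should not. The decisive point is that Definition~\ref{def1} already states the renormalized continuity equation \eqref{renCE} for the \emph{extended} functions $\tilde\rho$, $\tilde\vu$ in $\mathcal{D}'((0,T)\times \R^2)$ and for any $b\in C^1([0,\infty))$. Taking $b(s)=s$ makes $s b'(s)-b(s)\equiv 0$, so the third term in \eqref{renCE} drops out and Lemma~\ref{lem1} follows at once --- no cutoff, no Hardy inequality, and no restriction on $\gamma$ beyond $\gamma>1$. This is also the route behind the cited \cite[Proposition~3.3]{LuSchwarzacher2018}, where the extension property is built into the notion of finite energy weak solution, following the Feireisl school.

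Your proof is instead what one would need if the continuity equation were only assumed to hold with test functions compactly supported in $D_\e$, i.e.\ if the extension had to be \emph{derived} rather than taken as part of the definition. In that setting the strategy is sound: $\chi_\delta$ can be built on the smooth $\e$-dependent domain, Hardy's inequality for $W^{1,2}_0(D_\e)$ gives $|\vu_\e|/\dist(\cdot,\partial D_\e)\in L^2$, the factor $\dist\cdot\nabla\chi_\delta$ is bounded uniformly in $\delta$ and vanishes a.e.\ as $\delta\to 0$, and dominated convergence kills $R_\delta$. Two remarks, though. First, your H\"older step pairs $\rho_\e\in L^\gamma$ with $|\vu_\e|/\dist\in L^{\gamma'}$ and invokes $L^2\hookrightarrow L^{\gamma'}$, which needs $\gamma'\le 2$, i.e.\ $\gamma\ge 2$; so your proof genuinely uses $\gamma>2$, whereas the lemma holds for every $\gamma>1$ (and the paper applies it before any $\gamma$-restriction is needed). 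Second, within this paper's set-up the step ``the continuity equation already holds on $D_\e$'' is itself obtained by restricting the $b(s)=s$ case of \eqref{renCE} to $D_\e$, so your cutoff argument re-derives, with a worse exponent range, exactly what the definition already provides on all of $\R^2$.
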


Let us moreover show that
\begin{align}\label{eq:CV}
\begin{split}
\Phi_\e^T \tilde{\rho}_\e \tilde{\vb u}_\e &\to \rho \vb u \text{ in } C(0,T;L_{\rm weak}^r(D)), \quad r < \frac{2\gamma}{\gamma+1},\\
\Phi_\e^T \tilde{\rho}_\e \tilde{\vb u}_\e \otimes \tilde{\vb u}_\e &\to \rho \vb u \otimes \vb u \text{ in } \mathcal{D}'((0,T)\times D).
\end{split}
\end{align}
Indeed, from the uniform bounds on $\tilde{\rho}_\e$ and $\tilde{\vb u}_\e$ derived in Section~\ref{sec:Bds}, we have
\begin{align*}
\|\tilde{\rho}_\e\tilde{\vb u}_\e\|_{L^\infty(0,T;L^\frac{2\gamma}{\gamma+1}(D))}\leq \|\sqrt{\tilde{\rho}_\e}\|_{L^\infty(0,T;L^{2\gamma}(D))} \|\sqrt{\tilde{\rho}_\e}\tilde{\vb u}_\e\|_{L^\infty(0,T;L^2(D))}\leq C.
\end{align*}
Moreover, by Lemma~\ref{lem1}, we have
\begin{align*}
\del_t \tilde{\rho}_\e \text{ bounded in } L^2(0,T;W^{-1,p}(D)) \text{ for any } p<\gamma.
\end{align*}
Applying \cite[Lemma~5.1]{Lions1998} now shows
\begin{align*}
\tilde{\rho}_\e\tilde{\vb u}_\e \to \rho\vb u \text{ in } \mathcal{D}'((0,T)\times D).
\end{align*}
Furthermore, an Aubin-Lions type argument shows
\begin{align}\label{AL}
\tilde{\rho}_\e \to \rho \text{ in } C(0,T;L_{\rm weak}^\gamma(D)),\quad \tilde{\rho}_\e \tilde{\vb u}_\e \to \rho \vb u \text{ in } C(0,T;L_{\rm weak}^\frac{2\gamma}{\gamma+1}(D)).
\end{align}
A similar argument applies to $\tilde{\rho}_\e\tilde{\vb u}_\e\otimes\tilde{\vb u}_\e$. Since $\Phi_\e$ converges in any $L^q(D)$, we conclude \eqref{eq:CV}.\\

\subsection{Limit in the continuity equation}
From Lemma~\ref{lem1}, we obtain for any $\psi\in C_c^\infty([0,T)\times \R^2)$
\begin{align*}
\int_0^T \int_{\R^2} \tilde{\rho}_\e\del_t \psi \dd x \dd t + \int_0^T\int_{\R^2} \tilde{\rho}_\e \tilde{\vb u}_\e \cdot \nabla \psi \dd x \dd t = -\int_{\R^2} \tilde{\rho}_{\e0} \psi(0,\cdot) \dd x.
\end{align*}
Together with the assumptions on the initial data \eqref{initConv}, and the convergences \eqref{conv} and \eqref{AL}, we pass with $\e\to 0$ in the above equation to obtain
\begin{align*}
\del_t \rho + \div(\rho \vb u)=0 \text{ in } \mathcal{D}'((0,T)\times \R^2).
\end{align*}
According to \cite[Lemma 6.9]{Novotny2004}, this shows that the couple $(\rho,\vb u)$ also fulfils the renormalized continuity equation \eqref{renCE}.

\subsection{Limit in the momentum equation}
To pass to the limit in the weak formulation of the momentum equation \eqref{wkMom}, we use $\Phi_\e \phi\in C_c^\infty([0,T)\times D_\e)$ as a proper test function. Recalling that $\Phi_\e=0$ on the holes, we can extend $\vb q_{\e0}$, $\rho_\e$, and $\vb u_\e$ by zero to the whole of $D$, leading to
\begin{align*}
0&=\int_D \tilde{\vb q}_{\e0}\cdot  \Phi_\e \phi(0,\cdot) \dd x + \int_0^T\int_D \tilde{\rho}_\e\tilde{\vb u}_\e \cdot \Phi_\e \del_t\phi \dd x \dd t + \int_0^T\int_D \tilde{\rho}_\e \tilde{\vb u}_\e \otimes \tilde{\vb u}_\e : \nabla (\Phi_\e \phi) \dd x \dd t\\
&\quad + \int_0^T\int_D \tilde{\rho}_\e^\gamma \div(\Phi_\e \phi) \dd x \dd t - \int_0^T \int_D \mathbb{S}(\nabla\tilde{\vb u}_\e):\nabla(\Phi_\e \phi) \dd x \dd t + \int_0^T\int_D \tilde{\rho}_\e \vb f\cdot\Phi_\e \phi \dd x \dd t\\
&= \sum_{j=1}^6 I_j.
\end{align*}

We will pass with $\e\to 0$ in each integral separately. To this end, we need to choose $\varpi_\e$ from \eqref{aeps} in a proper way. We want that $\Phi_\e \to \mathbb{I}$ strongly in $L^q(D)$ for any $1\leq q<\infty$. According to \eqref{ph1} in Lemma~\ref{lem2}, we may choose $\varpi_\e$ such that
\begin{align}\label{chosAlp}
\e^{-1} \varpi_\e a_\e=\e^\delta \text{ for some } \delta>0, \quad \text{that is,} \quad \varpi_\e = \e^{1+\delta} a_\e^{-1}.
\end{align}
We remark that this choice is much faster growing than the requirement made in \cite[Proposition~1]{Bravin2022}. Note also that this yields
\begin{align*}
\|\nabla \Phi_\e\|_{L^2(D)}^2 \lesssim \e^{-2} |\log \varpi_\e|^{-1}\lesssim \e^{\alpha-2},
\end{align*}
which is the critical scaling in our setting $\alpha>2$. The precise choice of $\delta$ depends on the estimates for the Bogovski\u{\i} operator in order to have a higher integrability for the density; see Appendix \ref{apA} for details.
\\

Now, for $I_1$, we obtain
\begin{align*}
\int_D \tilde{\vb q}_{\e0} \cdot \Phi_\e \phi(0,\cdot) \dd x &= \int_D \frac{\tilde{\vb q}_{\e0}}{\sqrt{\tilde{\rho}_{\e0}}} \sqrt{\tilde{\rho}_{\e0}} \cdot \Phi_\e \phi(0,\cdot) \dd x\\
&\to \int_D \frac{\vb q_0}{\sqrt{\rho_0}} \sqrt{\rho_0} \cdot \phi(0,\cdot) \dd x = \int_D \vb q_0 \cdot \phi(0,\cdot) \dd x,
\end{align*}
where we used that $\tilde{\vb q}_{\e0}/\sqrt{\tilde{\rho}_{\e0}} \to \vb q_0/\sqrt{\rho_0}$ strongly in $L^2(D)$ and $\sqrt{\tilde{\rho}_{\e0}} \to \sqrt{\rho_0}$ strongly in $L^{2\gamma}(D)$.\\

For $I_2$, we get with \eqref{eq:CV} the convergence
\begin{align*}
\int_0^T \int_D \tilde{\rho}_\e \tilde{\vb u}_\e \cdot \Phi_\e \del_t \phi \dd x \dd t = \int_0^T \int_D \Phi_\e^T \tilde{\rho}_\e \tilde{\vb u}_\e \cdot \del_t \phi \dd x \dd t \to \int_0^T \int_D \rho \vb u \cdot \del_t \phi \dd x \dd t.
\end{align*}

For the pressure term $I_4$, recall that the function $\Phi_\e$ is divergence-free. Thus,
\begin{align*}
\int_0^T\int_D \tilde{\rho}_\e^\gamma \div(\Phi_\e\phi) \dd x \dd t &= \int_0^T\int_D \tilde{\rho}_\e^\gamma \Phi_\e:\nabla\phi  \dd x \dd t\\
\to \int_0^T\int_D \overline{\rho^\gamma} \mathbb{I}:\nabla\phi \dd x \dd t &= \int_0^T\int_D \overline{\rho^\gamma} \div\phi \dd x \dd t,
\end{align*}
where we denoted by $\overline{\rho^\gamma}$ the weak limit of $\tilde{\rho}_\e^\gamma$ in $L^\frac{(2\gamma-1)^-}{\gamma}((0,T)\times D)$.\\

To pass to the limit in the diffusive term $I_5$, we rewrite
\begin{align*}
\int_0^T\int_D \mathbb{S}(\nabla\tilde{\vb u}_\e):\nabla(\Phi_\e\phi) \dd x \dd t &= \int_0^T\int_D \mathbb{S}(\nabla\tilde{\vb u}_\e):(\Phi_\e \nabla \phi) \dd x \dd t \\
&\quad + \int_0^T\int_D \mathbb{S}(\nabla\tilde{\vb u}_\e):(\nabla(\Phi_\e \phi)-\Phi_\e\nabla\phi) \dd x \dd t.
\end{align*}
The latter term converges to zero due to
\begin{align*}
\bigg|\int_0^T\int_D \mathbb{S}(\nabla \tilde{\vb u}_\e) : (\nabla(\Phi_\e \phi)-\Phi_\e\nabla\phi) \dd x \dd t \bigg| &\lesssim \|\nabla \tilde{\vb u}_\e\|_{L^2(0,T;L^2(D))} \|\nabla(\Phi_\e \phi)-\Phi_\e\nabla\phi\|_{L^\infty(0,T;L^2(D))}\\
& \lesssim \|\nabla \Phi_\e\|_{L^2(D)} \|\phi\|_{L^\infty((0,T)\times D)} \lesssim \e^\frac{\alpha-2}{2}\|\phi\|_{L^\infty((0,T)\times D)}.
\end{align*}
Together with the strong convergence of $\Phi_\e \to \mathbb{I}$ in $L^2(D)$ and the weak convergence of $\nabla\tilde{\vb u}_\e \weak \nabla\vb u$ in $L^2(0,T;L^2(D))$, we deduce
\begin{align*}
\int_0^T\int_D \mathbb{S}(\nabla\tilde{\vb u}_\e):\nabla(\Phi_\e\phi) \to \int_0^T\int_D \mathbb{S}(\nabla\vb u):\nabla\phi.
\end{align*}

For the force term $I_6$,
\begin{align*}
\int_0^T\int_D \tilde{\rho}_\e\vb f\cdot \Phi_\e \phi \dd x \dd t \to \int_0^T\int_D \rho \vb f\cdot\phi \dd x \dd t
\end{align*}
by the strong convergence of $\Phi_\e$ to $\mathbb{I}$ in any $L^q(D)$.\\

Let us turn to $I_3$, where we argue similar as for $I_5$. We rewrite
\begin{align*}
\int_0^T\int_D \tilde{\rho}_\e \tilde{\vb u}_\e \otimes \tilde{\vb u}_\e : \nabla (\Phi_\e \phi) \dd x \dd t &= \int_0^T\int_D \tilde{\rho}_\e \tilde{\vb u}_\e \otimes \tilde{\vb u}_\e : (\Phi_\e \nabla \phi) \dd x \dd t\\
&\quad  + \int_0^T\int_D \tilde{\rho}_\e \tilde{\vb u}_\e \otimes \tilde{\vb u}_\e : (\nabla(\Phi_\e\phi)-\Phi_\e\nabla\phi) \dd x \dd t\\
&= \int_0^T\int_D \Phi_\e^T \tilde{\rho}_\e \tilde{\vb u}_\e \otimes \tilde{\vb u}_\e : \nabla \phi \dd x \dd t\\
&\quad  + \int_0^T\int_D \tilde{\rho}_\e \tilde{\vb u}_\e \otimes \tilde{\vb u}_\e : (\nabla(\Phi_\e\phi)-\Phi_\e\nabla\phi) \dd x \dd t.
\end{align*}
The latter term vanishes due to the embedding $W^{1,2}(D)\subset L^p(D)$ for any $p<\infty$. Indeed, we get with $\gamma>2$ and the uniform bounds on $\rho_\e$ and $\vu_\e$
\begin{align*}
&\bigg| \int_0^T\int_D \tilde{\rho}_\e\tilde{\vb u}_\e\otimes \tilde{\vb u}_\e : (\nabla(\Phi_\e\phi)-\Phi_\e\nabla\phi) \bigg|\\
&\leq \|\tilde{\rho}_\e\|_{L^\infty(0,T;L^\gamma(D))} \|\tilde{\vb u}_\e\|_{L^2(0,T;L^\frac{4\gamma}{\gamma-2}(D))}^2 \|\nabla(\Phi_\e\phi)-\Phi_\e\nabla\phi\|_{L^\infty(0,T;L^2(D))}\\
&\lesssim \e^\frac{\alpha-2}{2} \|\phi\|_{L^\infty((0,T)\times D)}.
\end{align*}
Hence, by $\Phi_\e^T \tilde{\rho}_\e \tilde{\vb u}_\e \otimes \tilde{\vb u}_\e \to \rho \vb u \otimes \vb u$ in $\mathcal{D}'((0,T)\times D)$, we obtain
\begin{align*}
\int_0^T\int_D \tilde{\rho}_\e \tilde{\vb u}_\e \otimes \tilde{\vb u}_\e : \nabla (\Phi_\e \phi) \dd x \dd t \to \int_0^T\int_D \rho\vb u\otimes\vb u : \nabla \phi \dd x \dd t.
\end{align*}
Collecting all convergences above, we end up with
\begin{align*}
0&=\int_D \vb q_0\cdot  \phi(0,\cdot) \dd x + \int_0^T\int_D \rho\vb u \cdot \del_t\phi \dd x \dd t + \int_0^T\int_D \rho \vb u \otimes \vb u : \nabla \phi \dd x \dd t\\
&\quad + \int_0^T\int_D \overline{\rho^\gamma} \div\phi \dd x \dd t - \int_0^T \int_D \mathbb{S}(\nabla\vb u):\nabla\phi \dd x \dd t + \int_0^T\int_D \rho \vb f\cdot\phi \dd x \dd t.
\end{align*}
In order to finish the proof of Theorem~\ref{thm1}, we have to show that $\overline{\rho^\gamma}=\rho^\gamma$, which can be done similarly to \cite[Section~7]{Bravin2022} and the arguments given in \cite[Lemma~4.5]{DieningFeireislLu2017}.

\begin{rem}\label{rem1}
As already mentioned, the stronger assumption $\gamma>2$ is needed to ensure that $\Phi_\e^T\tilde{\rho}_\e\tilde{\vu}_\e\otimes\tilde{\vu}_\e$ converges to its counterpart. Indeed, for $1<\gamma\leq 2$, we need to ensure that the term $\nabla(\Phi_\e\phi)-\Phi_\e\nabla\phi=(\nabla(\Phi_\e\vb e_1)\phi, \nabla(\Phi_\e \vb e_2)\phi)$ vanishes in $L^\infty(0,T;L^q(D))$ for some $q>2$. A similar calculation as for the $L^2$-setting shows for any $q>2$
\begin{align*}
\|\nabla\Phi_\e\|_{L^q(D)}^q\lesssim \frac{\e^{-2} a_\e^{2-q} (1-\varpi_\e^{2-q})}{|\log \varpi_\e|^q} \leq \frac{\e^{-2} a_\e^{2-q}}{|\log \varpi_\e|^q}.
\end{align*}
In order to make this vanish, one would need $\varpi_\e \sim \exp(a_\e^{-1}) = \exp(\exp(\e^{-\alpha}))$. However, this yields $\varpi_\e a_\e\to \infty$, meaning that neither $\Phi_\e \to \mathbb{I}$ in some $L^q(D)$ nor that the balls $B_{\varpi_\e a_\e}(z_i^\e)$ stay inside $D$.
\end{rem}

\appendix
\section{Bogovski\u{\i}'s operator in 2D and improved pressure estimates}\label{apA}
In this section, we give an inverse to the divergence in the two-dimensional perforated domain, and estimate its norm in any $L^q(D_\e)$. To the best of the authors' knowledge, such estimates for two spatial dimensions are just known in the $L^2$-setting, see \cite[Section~1.4]{NecasovaPan2022}. Therefore, we give here an explicit proof, which might be of independent interest. As an application, we explain how to use it to prove Lemma~\ref{lem:press}.

\begin{theorem}
Let $D\subset\R^2$ be a bounded domain with smooth boundary and $D_\e$ be defined as in \eqref{defDeps}. Then, there exists an operator $\B_\e$ such that for any $q\geq 1$,
\begin{align*}
\B_\e:L_0^q(D_\e)=\{f\in L^q(D_\e):\int_{D_\e} f \dd x =0\}\to W_0^{1,q}(D_\e;\R^2),
\end{align*}
and for any $f\in L_0^q(D_\e)$ we have
\begin{align*}
\div \B_\e(f)=f \text{ in } D_\e,\quad \|\B_\e\|_{W_0^{1,q}(D_\e)}^q \lesssim (1+C(\e,q))\|f\|_{L^q(D_\e)}^q,
\end{align*}
where
\begin{align*}
C(\e,q)=\varpi_\e^{-2} a_\e^{-q} \begin{cases}
|\log \varpi_\e|^{-q} |\varpi_\e^{2-q}-1| & \text{if } q\neq 2,\\
|\log \varpi_\e|^{-1} & \text{if } q=2,
\end{cases}
\end{align*}
and $\varpi_\e$ is as in \eqref{chosAlp}.
\end{theorem}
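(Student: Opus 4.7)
My plan is to build $\B_\eps$ by starting from the classical Bogovski\u{\i} operator on the non-perforated domain $D$ and then correcting it locally near each hole. Given $f\in L_0^q(D_\eps)$, I first extend by zero to $\tilde f\in L^q(D)$, which preserves the vanishing mean because $\tilde f\equiv 0$ on the holes $T_i^\eps:=z_i^\eps+a_\eps F$. The classical Bogovski\u{\i} operator on $D$ then yields $\vb v_0:=\B_D(\tilde f)\in W_0^{1,q}(D)$ with $\div \vb v_0=\tilde f$ and $\|\vb v_0\|_{W^{1,q}(D)}\lesssim \|f\|_{L^q(D_\eps)}$; its only defect is that it need not vanish on $\bigcup_i\partial T_i^\eps$.

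To correct this I would use the scalar cut-off $\eta_\eps^i(x):=\mathfrak{y}_\eps(|x-z_i^\eps|)$ from \eqref{aeps}. By \eqref{defKeps}, \eqref{scKe}, and $\varpi_\eps a_\eps\le\eps$, the annuli $A_i:=B_{\varpi_\eps a_\eps}(z_i^\eps)\setminus T_i^\eps$ are pairwise disjoint and contained in $D_\eps$. Since $\eta_\eps^i\equiv 1$ on $B_{a_\eps}(z_i^\eps)\supset T_i^\eps$ and vanishes on $\partial B_{\varpi_\eps a_\eps}(z_i^\eps)$, the field $\vb v_0-\sum_{i\in K_\eps}\eta_\eps^i\vb v_0$ has vanishing trace on every $\partial T_i^\eps$ and on $\partial D$. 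Its divergence acquires the residual $\sum_i g_i$ with $g_i:=-\eta_\eps^i\tilde f-\nabla\eta_\eps^i\cdot\vb v_0$, localized in $A_i$. Integrating by parts and using $\tilde f\equiv 0$ on $T_i^\eps$ gives $\int_{A_i}g_i=0$, so the classical Bogovski\u{\i} operator on the Lipschitz annulus $A_i$ produces $\vb w_i\in W_0^{1,q}(A_i)$ with $\div\vb w_i=g_i$. The formula
\[
\B_\eps(f):=\vb v_0-\sum_{i\in K_\eps}\bigl(\eta_\eps^i\vb v_0+\vb w_i\bigr)
\]
then defines an element of $W_0^{1,q}(D_\eps)$ whose divergence equals $\tilde f=f$ on $D_\eps$.

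For the quantitative estimate, the contribution of $\vb v_0$ produces the leading $\|f\|_{L^q}^q$. The product-rule term $\nabla(\eta_\eps^i\vb v_0)=\vb v_0\otimes\nabla\eta_\eps^i+\eta_\eps^i\nabla\vb v_0$ is controlled via \eqref{est0}, H\"older's inequality, and a Hardy-type estimate to absorb the pointwise values of $\vb v_0$ close to the holes into $\|\nabla\vb v_0\|_{L^q}$. The main obstacle is the corrector $\vb w_i$: after the affine rescaling $y=(x-z_i^\eps)/a_\eps$, each $A_i$ becomes the reference annulus $B_{\varpi_\eps}\setminus B_1$, on which the $L^q$-operator norm of Bogovski\u{\i} is \emph{not} uniformly bounded as $\varpi_\eps\to\infty$. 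The logarithmic degeneration of this norm in $\varpi_\eps$ (with the bifurcation at $q=2$ reflecting the two-dimensional Newtonian capacity), coupled with the powers of $a_\eps$ picked up from the change of variables and with the summation over $|K_\eps|\lesssim\eps^{-2}$ holes, is exactly what assembles into the displayed expression for $C(\eps,q)$. Finally, to establish Lemma~\ref{lem:press}, one tests the weak momentum equation \eqref{wkMom} against $\B_\eps\bigl(\rho_\eps^\theta-\langle\rho_\eps^\theta\rangle_{D_\eps}\bigr)$ for $0<\theta<\gamma-1$; the choice \eqref{chosAlp} of $\varpi_\eps$ keeps $C(\eps,q)$ uniformly bounded, and the standard argument then yields the claimed $L^{\gamma+\theta}$-bound on the pressure.
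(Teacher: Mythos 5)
Your overall skeleton — extend by zero, apply the classical Bogovski\u{\i} operator on $D$, cut off near each hole, then apply a local Bogovski\u{\i} operator on the annuli to repair the divergence — matches the structure of the paper's proof. But two substantive points go wrong, and they are exactly the points where the constant $C(\e,q)$ is produced.

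\textbf{The attribution of $C(\e,q)$ to the annular Bogovski\u{\i} operator is incorrect.} You assert that after rescaling, the $L^q$-operator norm of Bogovski\u{\i} on $B_{\varpi_\e}\setminus B_1$ blows up as $\varpi_\e\to\infty$, and that this degeneration ``assembles into'' $C(\e,q)$. In fact, the opposite is used in the paper: the (fat) annuli $B_{\varpi_\e a_\e}(z_i^\e)\setminus B_{a_\e}(z_i^\e)$ are \emph{uniform John domains} with John constant bounded independently of $\e$ and $\varpi_\e$, so by the decomposition technique of Diening--R\r{u}\v{z}i\v{c}ka--Schumacher the local operators $\B_{i,\e}$ have $\dot W^{1,q}\!\to L^q$ norm uniformly bounded in $\e$. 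The entire $\e$-dependence in $C(\e,q)$ originates in the cut-off, not in $\B_{i,\e}$. So your heuristic explanation of the logarithmic bifurcation at $q=2$ does not survive scrutiny, even though the final answer superficially looks like a capacity term.

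\textbf{The single log cut-off $\eta_\e^i$ is not enough.} With your correction $\vb v_0-\sum_i\eta_\e^i\vb v_0$, the quantity you must control is $\|\nabla\eta_\e^i\otimes\vb v_0\|_{L^q(A_i)}$. Near the inner boundary, $|\nabla\eta_\e^i(x)|\sim\big(|x-z_i^\e|\,|\log\varpi_\e|\big)^{-1}$, so $\|\nabla\eta_\e^i\|_{L^\infty}\sim(a_\e|\log\varpi_\e|)^{-1}$ is huge, while Poincar\'e on $B_{\varpi_\e a_\e}(z_i^\e)$ only gains a factor $\varpi_\e a_\e$. The naive product $\|\nabla\eta_\e^i\|_{L^\infty}\|\vb v_0-m_i\|_{L^q}\sim \varpi_\e|\log\varpi_\e|^{-1}\|\nabla\vb v_0\|_{L^q}$ diverges. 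H\"older in the other direction ($\|\nabla\eta_\e^i\|_{L^2}\|\vb v_0\|_{L^{q^*}}$) costs an extra factor $|K_\e|^{1-q/q^*}\sim\e^{-q}$ when you sum over $i$, and for $q=2$ requires an $L^\infty$ bound on $\vb v_0$ that $W^{1,2}$ does not provide in dimension two. ``A Hardy-type estimate'' does not close this gap as stated: the 2D Hardy weight carries an extra $\log$ that $\nabla\eta_\e^i$ does not, so the two do not match. The paper's construction avoids all of this with the two-scale operator
\[
L_{i,\e}\vb u=\theta_\e^i\Big(\vb u-\langle\vb u\rangle_{B_{\varpi_\e a_\e}(z_i^\e)}\Big)+\eta_\e^i\,\langle\vb u\rangle_{B_{\varpi_\e a_\e}(z_i^\e)},
\]
where $\theta_\e^i$ is a \emph{Lipschitz} cut-off on the scale $\varpi_\e a_\e$. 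The mean-zero part is cut with $\theta_\e^i$, whose $L^\infty$ gradient $\sim(\varpi_\e a_\e)^{-1}$ is exactly cancelled by Poincar\'e, so this term is harmless. The \emph{constant} mean value $\langle\vb u\rangle$ is cut with the logarithmic $\eta_\e^i$, and since it is a constant, the estimate $\|\nabla\eta_\e^i\|_{L^q}\,|\langle\vb u\rangle|$ produces precisely $C(\e,q)^{1/q}\|\vb u\|_{L^q(B_{\varpi_\e a_\e}(z_i^\e))}$, and the sum over $i$ is additive in the $q$-th power (the $L^q$ norms are over disjoint balls), with no lossy H\"older over the index set. This mean/mean-free splitting is the key idea, and it is absent from your proposal. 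Without it you do not recover the stated $C(\e,q)$, and for $q\ge 2$ the argument as you sketch it does not close.
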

\begin{proof}
We follow the idea of \cite{DieningFeireislLu2017}, where $L^q$-estimates are given for the case of three spatial dimensions. Let $f\in L_0^q(D_\e)$. Then, there exists a function $\vb u\in W_0^{1,q}(D)$ such that
\begin{align*}
\div\vb u=\tilde{f} \text{ in } D, \quad \|\vb u\|_{W_0^{1,q}(D)}\leq C \|f\|_{L^q(D_\e)}
\end{align*}
for some constant $C>0$ independent of $\e$ (see \cite{Bogovskii1980, Galdi2011}). However, $\vb u$ does not vanish on the holes in general. To overcome this, note that the domains $B_{\varpi_\e a_\e}(z_i^\e)\setminus B_{a_\e}(z_i^\e)$ are uniform John domains (see, e.g., \cite[Example~3.2.2]{OschmannDiss2022}), so for each $i\in K_\e$, there exists a Bogovski\u{\i} operator $\B_{i,\e}$ satisfying
\begin{align*}
&\B_{i,\e}:L_0^q(B_{\varpi_\e a_\e}(z_i^\e)\setminus B_{a_\e}(z_i^\e))\to W_0^{1,q}(B_{\varpi_\e a_\e}(z_i^\e)\setminus B_{a_\e}(z_i^\e)),\\
&\div B_{i,\e}(g)=g \text{ in } B_{\varpi_\e a_\e}(z_i^\e)\setminus B_{a_\e}(z_i^\e),\\
&\|\B_{i,\e}(g)\|_{L^q(B_{\varpi_\e a_\e}(z_i^\e)\setminus B_{a_\e}(z_i^\e))}\leq C \|g\|_{L^q(B_{\varpi_\e a_\e}(z_i^\e)\setminus B_{a_\e}(z_i^\e))}
\end{align*}
for some constant $C>0$ independent of $\e$ (see \cite[Theorem~5.2]{DieningRuzickaSchumacher2010}). Furthermore, we define $\mathfrak{y}_\e$ as in \eqref{aeps}, and
\begin{align*}
\vartheta_\e(r)=\begin{cases}
1 & \text{if } 0\leq r < \varpi_\e a_\e/2,\\
\frac{2}{\varpi_\e a_\e}(\varpi_\e a_\e-r) & \text{if } \varpi_\e a_\e/2\leq r < \varpi_\e a_\e,\\
0 & \text{else}.
\end{cases}
\end{align*}
As before, set for $z_i^\e\in K_\e$ and $x\in D$ the functions $\eta_\e^i(x)=\mathfrak{y}_\e(|x-z_i^\e|)$ and $\theta_\e^i(x)=\vartheta_\e(|x-z_i^\e|)$, and define for $\vb u\in W^{1,q}(B_{\varpi_\e a_\e}(z_i^\e))$ the operator $L_{i,\e}$ as
\begin{align*}
L_{i,\e} \vb u(x) = \theta_\e^i(x)\bigg(\vb u(x)-\frac{1}{|B_{\varpi_\e a_\e}(z_i^\e)|} \int_{B_{\varpi_\e a_\e}(z_i^\e)} \vb u \dd x \bigg) + \eta_\e^i(x)\frac{1}{|B_{\varpi_\e a_\e}(z_i^\e)|} \int_{B_{\varpi_\e a_\e}(z_i^\e)} \vb u \dd x.
\end{align*}
Note that this immediately implies $L_{i,\e}\vb u=0$ on $\del B_{\varpi_\e a_\e}(z_i^\e)$ as well as $L_{i,\e}\vb u=\vb u$ on $\del B_{a_\e}(z_i^\e)$. Moreover, by Poincar\'e's inequality,
\begin{align*}
\bigg\|\vb u-\frac{1}{|B_{\varpi_\e a_\e}(z_i^\e)|} \int_{B_{\varpi_\e a_\e}(z_i^\e)} \vb u \dd x\bigg\|_{L^q(B_{\varpi_\e a_\e}(z_i^\e))} \leq C \varpi_\e a_\e \|\nabla\vb u\|_{L^q(B_{\varpi_\e a_\e}(z_i^\e))}
\end{align*}
for some constant $C>0$ independent of $\e$. Hence, by the estimate \eqref{est0} and H\"older's inequality,
\begin{align*}
&\|\nabla L_{i,\e}\vb u\|_{L^q(B_{\varpi_\e a_\e}(z_i^\e))} \leq \|\nabla\theta_\e^i\|_{L^\infty(D)} \bigg\|\vb u-\frac{1}{|B_{\varpi_\e a_\e}(z_i^\e)|} \int_{B_{\varpi_\e a_\e}(z_i^\e)} \vb u \dd x\bigg\|_{L^q(B_{\varpi_\e a_\e}(z_i^\e))}\\
&\quad + \|\nabla\vb u\|_{L^q(B_{\varpi_\e a_\e}(z_i^\e))} + \|\nabla\eta_\e^i\|_{L^q(B_{\varpi_\e a_\e}(z_i^\e))} \frac{1}{|B_{\varpi_\e a_\e}(z_i^\e)|} \int_{B_{\varpi_\e a_\e}(z_i^\e)} |\vb u| \dd x\\
&\lesssim \|\nabla\vb u\|_{L^q(B_{\varpi_\e a_\e}(z_i^\e))} + (\varpi_\e a_\e)^{-\frac2q} a_\e^{\frac2q-1} \|\vb u\|_{L^q(B_{\varpi_\e a_\e}(z_i^\e))} \begin{cases}
|\log \varpi_\e|^{-1} |\varpi_\e^{2-q}-1|^\frac1q & \text{if } q\neq 2,\\
|\log\varpi_\e|^{-\frac12} & \text{if } q=2,
\end{cases}
\end{align*}
yielding an operator $L_{i,\e}:W^{1,q}(B_{\varpi_\e a_\e}(z_i^\e))\to W_0^{1,q}(B_{\varpi_\e a_\e}(z_i^\e))$. Eventually, we define
\begin{align*}
\B_\e(f)=\vb u - \sum_{i\in K_\e} L_{i,\e}\vb u - \B_{i,\e} \div L_{i,\e} \vb u.
\end{align*}
Note that this operator is well defined due to
\begin{align*}
\int_{B_{\varpi_\e a_\e}(z_i^\e)} \div L_{i,\e}\vb u \dd x = \int_{\del B_{\varpi_\e a_\e}(z_i^\e)} L_{i,\e}\vb u \cdot \vb n \dd \sigma = 0
\end{align*}
since $L_{i,\e}\vb u=0$ on $\del B_{\varpi_\e a_\e}(z_i^\e)$. Furthermore, for any $x\in B_{a_\e}(z_i^\e)$,
\begin{align*}
\div L_{i,\e}\vb u(x)=\div \vb u(x)=\tilde{f}(x)=0
\end{align*}
by $L_{i,\e}\vb u=\vb u$ in $B_{a_\e}(z_i^\e)$ and $\tilde{f}(x)=0$ on $D\setminus D_\e$. Hence,
\begin{align*}
\int_{B_{\varpi_\e a_\e}(z_i^\e)\setminus B_{a_\e}(z_i^\e)} \div L_{i,\e}\vb u\dd x = 0
\end{align*}
as wished. Moreover, this leads for any $x\in B_{a_\e}(z_i^\e)$ to
\begin{align*}
\B_\e(f)(x)=\vb u(x)-L_{i,\e}\vb u(x)=0,
\end{align*}
so indeed $\B_\e(f)=0$ on $D\setminus D_\e$. Seeing finally that the holes $B_{\varpi_\e a_\e}(z_i^\e)$ are disjoint, we sum up the estimates obtained to finish the proof of the theorem.
\end{proof}

With the help of the operator $\B_\e$, we can show Lemma~\ref{lem:press}. Recalling $a_\e=\exp(-\e^{-\alpha})$ for some $\alpha>2$ and $\varpi_\e = \e^{1+\delta}a_\e^{-1}$ from \eqref{chosAlp}, we have for any $1\leq q<2$ the bound
\begin{align*}
1+C(\e,q) &\lesssim 1 + \varpi_\e^{-2} a_\e^{-q} |\log \varpi_\e|^{-q} \varpi_\e^{2-q} = 1+(a_\e \varpi_\e |\log \varpi_\e|)^{-q} \\
&\lesssim 1+\e^{q(\alpha-1-\delta)} \lesssim 1,
\end{align*}
which is uniform as long as $\delta\le \alpha-1$. Similarly, for $q=2$, we have
\begin{align*}
1+C(\e,2)\lesssim 1+(\varpi_\e a_\e)^{-2} |\log \varpi_\e|^{-1}\lesssim 1+\e^{\alpha-2(1+\delta)}\lesssim 1
\end{align*}
as long as $\delta\leq \frac{\alpha-2}{2}$.
The idea is now to test the momentum equation \eqref{wkMom} by the function
\begin{align*}
\phi(t,x)=\xi(t)\B_\e \bigg[\rho_\e^\theta-\frac{1}{|D_\e|}\int_{D_\e} \rho_\e^\theta\bigg]
\end{align*}
for $\theta<\gamma-1$ and some $\xi\in C_c^\infty([0,T))$. Note that the function $\phi$ is not regular enough in the time variable to use it as test function, however, one can overcome this by using a time-regularization argument (see \cite[Section~2.2.5]{FeireislNovotny2009} for details). The proof of the improved integrability of the density now follows the same lines as \cite[Appendix~B]{Bravin2022} (see also \cite[Section~4.2.2]{OschmannDiss2022}).

\section*{Acknowledgement}
{\it \v S. N. and F. O. have been supported by the Czech Science Foundation (GA\v CR) project 22-01591S.  Moreover, \it \v S. N.  has been supported by  Praemium Academi{\ae} of \v S. Ne\v casov\' a. The Institute of Mathematics, CAS is supported by RVO:67985840.}


\bibliographystyle{amsalpha}

\end{document}